\documentclass[11pt,a4paper]{article}

\usepackage{amssymb,amsmath,amsfonts,amsthm,mathrsfs,mathtools}
\usepackage{bbm,bm}
\usepackage[marginal]{footmisc}
\usepackage{CJK}
\usepackage{mathrsfs}
\usepackage{graphicx}
\usepackage{float}
\usepackage{xcolor}
\usepackage{url}
\usepackage{relsize}
\usepackage{appendix}
\usepackage{epstopdf}
\usepackage{enumitem}

\usepackage[colorlinks,linkcolor=blue,anchorcolor=blue,citecolor=purple,urlcolor=green]{hyperref}

\title{\bf  The limit of the optimal Li--Yau constant \\ for the fractional heat equation}

\author{Huaiqian Li\footnote{Email: {\color{blue}huaiqian.li@tju.edu.cn}\quad  Partially supported by the National Key R\&D Program of China (Grant No. 2022YFA1006000) and  the National Natural Science Foundation of China (Grant No. 12671176).}
\vspace{2mm}
\\
{\footnotesize Center for Applied Mathematics and KL-AAGDM, Tianjin University,}\\ {\footnotesize Tianjin 300072, China}
}

\date{}

\numberwithin{equation}{section}

\newtheorem{theorem}{Theorem}[section]
\newtheorem{proposition}[theorem]{Proposition}
\newtheorem{lemma}[theorem]{Lemma}

\theoremstyle{remark}
\newtheorem{remark}[theorem]{Remark}

\newcommand{\R}{\mathbb{R}}
\newcommand{\E}{\mathbb{E}}
\newcommand{\Pb}{\mathbb{P}}
\newcommand{\dd}{\,\mathrm{d}}
\newcommand{\CLY}{C_{\mathrm{LY}}}
\newcommand{\Phib}{\Phi_\beta}
\newcommand{\psib}{\psi_\beta}
\newcommand{\eps}{\varepsilon}
\newcommand{\sphere}{\mathbb{S}^{d-1}}
\newcommand{\e}{\mathbf{e}}

\begin{document}
\allowdisplaybreaks
\maketitle
\makeatletter 
\renewcommand\theequation{\thesection.\arabic{equation}}
\@addtoreset{equation}{section}
\makeatother 

\begin{abstract}
For the fractional heat equation $\partial_t u + (-\Delta)^{\beta/2}u=0$ on $\mathbb{R}^d$ with $\beta\in(0,2)$, Weber and Zacher introduced the optimal constant $C_{\rm LY}(\beta,d)$ in the Li--Yau type estimate
$$(-\Delta)^{\beta/2}\log u(t,\cdot)\leq \frac {C_{\rm LY}(\beta,d)}{t},\quad t>0.$$
They asked whether $C_{\rm LY}(\beta,d)$ converges to the classical Li--Yau constant $d/2$ as $\beta\uparrow2$.  In this note we prove that the answer is affirmative.  More precisely, for every fixed $d\geq1$ there exists a constant $K_d>0$ such that
$$\frac{d}{\beta}\leq  C_{\rm LY}(\beta,d)\leq\frac{d}{\beta}+K_d(2-\beta)\log\frac{e}{2-\beta},\quad 1\leq\beta<2.$$
Consequently, $\lim_{\beta\uparrow2}C_{\rm LY}(\beta,d)=d/2$.  The proof has two main ingredients.  First, the Bochner subordination formula, together with a concavity argument, shows that the supremum in Weber--Zacher's formula for $C_{\rm LY}(\beta,d)$ is attained at the origin. Second, the factor $2-\beta$ is derived from a lower bound for the fractional heat kernel profile in the large-distance regime, where the L\'{e}vy--It\^{o} decomposition plays an important role.
\end{abstract}

{\bf MSC 2020:} 60G52, 35K08, 35R11, 35B45

{\bf Keywords:} Li--Yau inequality; fractional Laplacian; fractional heat kernel; stable subordinator; non-local diffusion

\section{Introduction}\hskip\parindent
The classical Li--Yau inequality states that, on a complete $d$-dimensional Riemannian manifold $\mathbb{M}$ with non-negative Ricci curvature, positive solutions of the heat equation $\partial_t u=\Delta u$ on $(0,\infty)\times \mathbb{M}$ satisfy a sharp differential Harnack estimate, i.e.,
  \begin{eqnarray}\label{LY86}
-\Delta\big(\log u(t,\cdot)\big)(x)\leq\frac{d}{2}t^{-1},\quad t>0,\, x\in \mathbb{M},
\end{eqnarray}
where $\Delta$ is the Laplace--Beltrami operator. In $\R^d$, the optimal constant $d/2$ is realized by the standard heat kernel
\begin{eqnarray}\label{Gauss-hk}
p_t(x)=\frac{1}{(4\pi t)^{d/2}}\exp\Big(-\frac{|x|^2}{4t}\Big),\quad x\in\R^d,\,t>0.
\end{eqnarray}
Refer to the seminal paper of Li and Yau \cite{LiYau1986}.

Recent work has aimed at finding analogues of Li--Yau estimates for non-local diffusion operators.  Weber and Zacher \cite{WeberZacher2023} established a general reduction principle, applicable in discrete and continuous settings, which derives non-local Li--Yau inequalities from corresponding heat kernel estimates. A key ingredient in their approach is a convexity estimate for the non-local quantity $\Psi_\Upsilon(\log u)$ (see equation (3) in the aforementioned paper), which allows heat kernel estimates to be transferred to arbitrary positive solutions. As a main application, they considered the fractional heat equation on $\R^d$:
\begin{equation}\label{fractional-HE}
\partial_t u + (-\Delta)^{\beta/2}u=0,\quad 0<\beta<2.
\end{equation}
They proved that every positive strong solution $u$ of \eqref{fractional-HE} satisfies
\begin{equation*}\label{WZ-LY}
 (-\Delta)^{\beta/2}\big(\log u(t,\cdot)\big)(x) \leq \frac{\CLY(\beta,d)}{t},\quad t>0,\,x\in\R^d,
\end{equation*}
where the constant $\CLY(\beta,d)$ is determined by the fractional heat kernel and is optimal among all constants for which the corresponding estimate holds for the heat kernel itself. Here, we say that $u$ is a strong solution to \eqref{fractional-HE} if for every fixed $T>0$,
$$u\in C([0,T)\times \R^d),\quad \partial_t u\in C((0,T)\times\R^d),$$
and \eqref{fractional-HE} holds pointwise for all $(t,x)\in(0,T)\times\R^d$. In \cite[Remark 3.4]{WeberZacher2023}, they asked the following question:
\begin{itemize}[leftmargin=2cm]
\item[($\mathcal{Q}$)] Does the optimal constant $\CLY(\beta,d)$ converge to the classical Li--Yau constant $d/2$ as $\beta\to2^-$?
\end{itemize}

Sharp Li--Yau inequalities in other non-local frameworks often depend on fine properties of the heat kernel. For instance, in \cite{LiQian2023}, the author and Qian proved sharp Li--Yau inequalities for the heat equation associated with Dunkl harmonic oscillators, which are non-local Schr\"odinger-type operators involving reflection symmetries and multiplicity functions.  That work was motivated by \cite{WeberZacher2023}, where sharpness is again obtained by reduction to the heat kernel. Discrete counterparts include the sharp Li--Yau inequality obtained in \cite[Theorem~4.1]{WeberZacher2023} for the continuous-time Markov chain associated with a finite complete graph, as well as the sharp semi-ultra-log-convexity estimate for the $\mathrm{M}/\mathrm{M}/\infty$ semigroup established by Chen and the author in \cite{ChenLi2025} through a similar transition-kernel argument. These results demonstrate how sharp differential or discrete convexity estimates for an entire semigroup can be obtained by identifying and exploiting the relevant structural properties of its heat or transition kernel.  Building on this, the present paper focuses on the limiting problem for $\CLY(\beta,d)$ as $\beta\to2^-$ and answers the above question ($\mathcal{Q}$) affirmatively.

Let $\beta\in(0,2)$. We use the same normalization of the fractional Laplacian as in \cite{WeberZacher2023} (see \cite{Garofalo2019,Kwas2017} for other equivalent formulations). For any sufficiently regular function $f$,
\begin{align}\label{Abeta}
L_\beta f(x)&:=(-\Delta)^{\beta/2}f(x)\cr
&=\frac{c_{\beta,d}}{2}\int_{\R^d}\frac{2f(x)-f(x+h)-f(x-h)}{|h|^{d+\beta}}\dd h,\quad x\in\R^d,
\end{align}
where $c_{\beta,d}$ is a normalizing constant with precise value given by
\begin{equation*}\label{cbetad}
c_{\beta,d}=\frac{2^\beta\Gamma\bigl((d+\beta)/2\bigr)}{\pi^{d/2}|\Gamma(-\beta/2)|},
\end{equation*}
and $\Gamma$ denotes the Gamma function. A detailed derivation of $c_{\beta,d}$ can be found in \cite[Proposition 5.6]{Garofalo2019} and \cite[Theorem 2.1]{BlumenthalGetoor1960}. Let $G^{(\beta)}(t,x)$ be the fractional heat kernel associated with $L_\beta$. It is well known that $G^{(\beta)}(\cdot,\cdot)$ is positive, infinitely differentiable on $(0,\infty)\times\R^d$, and for $t>0$, $G^{(\beta)}(t,\cdot)$ is radial and $\int_{\R^d} G^{(\beta)}(t,x)\dd x=1$. We use the following normalization
$$\widehat{G^{(\beta)}(t,\cdot)}(\xi)=e^{-t|\xi|^\beta},\quad t>0,\,\xi\in\R^d,$$
where $\widehat{f}$ stands for the Fourier transform of $f$. Letting
\begin{equation*}
\Phib(x):=G^{(\beta)}(1,x),
\end{equation*}
we have the self-similar form
\begin{equation}\label{self-sim}
       G^{(\beta)}(t,x)=t^{-d/\beta}\Phib(t^{-1/\beta}x),
\end{equation}
where $(t,x)\in(0,\infty)\times\R^d$. For more properties on the fractional heat kernel $G^{(\beta)}(t,x)$, refer to \cite[Sections 16 and 18]{Garofalo2019} and \cite[Section 3]{WeberZacher2023}  for instance.

Weber and Zacher's constant is given by
\begin{equation}\label{CLY-def}
\CLY(\beta,d)=\frac{c_{\beta,d}}{2}\sup_{y\in\R^d}\int_{\R^d}\frac{\log\dfrac{\Phib(y)^2}{\Phib(y+\sigma)\Phib(y-\sigma)}}{|\sigma|^{d+\beta}}\dd\sigma .
\end{equation}
Equivalently (by \eqref{Abeta}),
\begin{equation}\label{CLY-Abeta}
       \CLY(\beta,d)=\sup_{y\in\R^d} L_\beta(\log\Phib)(y).
\end{equation}

Our main result is the following.
\begin{theorem}\label{main}
For every fixed integer $d\geq1$,
\begin{equation}\label{main-limit}
\lim_{\beta\uparrow2}\CLY(\beta,d)=\frac d2.
\end{equation}
More precisely, there exists a constant $K_d>0$ (depending only on $d$) such that, for every $1\leq\beta<2$,
\begin{equation}\label{main-rate}
       \frac d\beta< \CLY(\beta,d)\leq\frac d\beta+K_d(2-\beta)\log\frac e{2-\beta}.
\end{equation}
\end{theorem}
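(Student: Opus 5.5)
The plan is to reduce the supremum in \eqref{CLY-Abeta} to the single point $y=0$, compare $L_\beta(\log\Phib)(0)$ with the exactly computable quantity $L_\beta\Phib(0)/\Phib(0)$, and bound the difference by a two-regime lower bound on $\Phib$.

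\emph{Step 1 (the supremum is attained at $0$).} By Bochner subordination,
$$\Phib(x)=\int_0^\infty(4\pi s)^{-d/2}e^{-|x|^2/(4s)}\,\eta_\beta(s)\dd s,$$
where $\eta_\beta$ is the density of the $\beta/2$-stable subordinator at time $1$. Hence $\Phib(x)=g(|x|^2)$ with $g$ a Laplace transform of a positive measure. By Cauchy--Schwarz (H\"older), $f:=\log g$ is convex and decreasing on $[0,\infty)$. Put $a=|y|^2$ and $b=|\sigma|^2$. Convexity of $f$ gives
$$2\log\Phib(y)-\log\Phib(y+\sigma)-\log\Phib(y-\sigma)\le 2\bigl(f(a)-f(a+b)\bigr).$$
Since $f'$ is nondecreasing, $f(a)-f(a+b)=\int_a^{a+b}(-f')$ is nonincreasing in $a$, so it is at most $f(0)-f(b)$. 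The right-hand side is exactly the second difference of $\log\Phib$ at $y=0$. Integrating against $|\sigma|^{-d-\beta}$ therefore gives $\CLY(\beta,d)=L_\beta(\log\Phib)(0)$.

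\emph{Step 2 (lower bound).} Differentiate the self-similar form \eqref{self-sim} in $t$ at $t=1$, $x=0$. Using $\partial_tG=-L_\beta G$, this gives $L_\beta\Phib(0)=\frac d\beta\Phib(0)$. Write $u(\sigma)=\Phib(\sigma)/\Phib(0)\le 1$. By radiality, $L_\beta(\log\Phib)(0)=c_{\beta,d}\int\log(1/u)\,|\sigma|^{-d-\beta}\dd\sigma$ and $L_\beta\Phib(0)/\Phib(0)=c_{\beta,d}\int(1-u)\,|\sigma|^{-d-\beta}\dd\sigma$. The elementary inequality $\log(1/u)\ge 1-u$, strict for $u<1$, then yields
$$\CLY(\beta,d)-\frac d\beta=c_{\beta,d}\int_{\R^d}\frac{u-1-\log u}{|\sigma|^{d+\beta}}\dd\sigma>0 .$$

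\emph{Step 3 (upper bound).} I will estimate this last integral, using $c_{\beta,d}\asymp_d(2-\beta)$ for $\beta\in[1,2)$. The key input is a lower bound for $\Phib$, uniform in $\beta\in[1,2)$:
$$\Phib(x)\ge \max\Bigl(c\,e^{-C|x|^2},\;c\,(2-\beta)|x|^{-d-\beta}\Bigr).$$

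\begin{itemize}
\item The Gaussian part comes from the subordination formula, restricting to $s\in[1/2,2]$. One needs that $\eta_\beta([1/2,2])$ stays bounded below as $\beta\uparrow2$; this holds because $\eta_\beta\to\delta_1$.
\item The polynomial part comes from the L\'evy--It\^o decomposition of the $\beta$-stable process into small jumps ($|z|\le1$) and large jumps. Consider the event of exactly one large jump, landing in a unit ball around $x$, while the small-jump part stays bounded. The L\'evy density of the large jumps is $c_{\beta,d}|z|^{-d-\beta}\asymp(2-\beta)|z|^{-d-\beta}$. The small-jump part has variance bounded uniformly in $\beta$ and a density bounded below near the origin.
\end{itemize}

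Now let $v=\log(1/u)\ge0$ and use $e^{-v}-1+v\le\min(v^2/2,\,v)$. Split at $R=\sqrt{\log(e/(2-\beta))}$.

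\begin{itemize}
\item On $|\sigma|\le R$, the Gaussian bound gives $v\le C'(1+|\sigma|^2)$. Near $0$, smoothness gives $v\le C|\sigma|^2$ uniformly, since $\Phib\to$ Gaussian in $C^2$ near $0$. Hence the integrand is $\lesssim|\sigma|^{4-d-\beta}$, and the contribution is $\lesssim(2-\beta)R^{4-\beta}\lesssim(2-\beta)\log\frac e{2-\beta}$.
\item On $|\sigma|>R$, the polynomial bound gives $v\le C+(d+2)\log|\sigma|+\log\frac1{2-\beta}$. The contribution is then $\lesssim(2-\beta)R^{-\beta}\bigl(\log R+\log\frac1{2-\beta}\bigr)$, which is also of the claimed size.
\end{itemize}

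Letting $\beta\uparrow2$ gives \eqref{main-limit}.

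I expect the main obstacle to be the \emph{uniformity in $\beta$} of these lower bounds on $\Phib$. This concerns especially the large-distance bound with the exact factor $(2-\beta)$, and the control of $v$ near the origin (uniform $C^2$ convergence to the Gauss kernel \eqref{Gauss-hk}) needed to get the quartic vanishing of the integrand.
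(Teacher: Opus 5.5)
Your proposal is correct in outline. Steps 1 and 2 are exactly the paper's Proposition 2.2 and Lemma 2.3: your $u-1-\log u$ is the paper's $H(\psib(|\sigma|^2))$ with $H(a)=a-1+e^{-a}$. The differences are all in Step 3.

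\emph{Near the origin.} You do not need the Gaussian lower bound, nor the uniform $C^2$ convergence you flagged as the main obstacle. The convexity of $f=\log g$ that you prove in Step 1 already gives, for every $\sigma$,
$$v(\sigma)=f(0)-f(|\sigma|^2)\le -f'(0+)|\sigma|^2 .$$
Here $-f'(0+)=\tfrac14\E(S_1^{\beta/2})^{-d/2-1}/\E(S_1^{\beta/2})^{-d/2}=\Gamma((d+2)/\beta)/(2d\,\Gamma(d/\beta))$. This is explicit from the negative moments $\E(S_1^\alpha)^{-\kappa}=\Gamma(\kappa/\alpha)/(\alpha\Gamma(\kappa))$ and bounded on $[1,2]$. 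It is how the paper gets $\psib(r)\le M_d r$. It also spares you the compactness argument needed to keep $\eta_\beta([1/2,2])$ positive away from $\beta=2$; the fact $\eta_\beta\to\delta_1$ only handles $\beta$ near $2$.

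\emph{The tail.} The paper applies the L\'evy--It\^o decomposition to the one-dimensional subordinator $S_1^{\beta/2}$, not to the $d$-dimensional stable process. Consider the event: the jumps $\le1$ sum to at most $M$ (by Markov's inequality), there is exactly one jump in $(r^2,2r^2]$, and there is no other jump $>1$. By Poisson independence it has probability $\gtrsim(2-\beta)r^{-\beta}$. On it, $S_1^{\beta/2}\in(r^2,(2+M)r^2]$, so the Gaussian factor in the subordination formula is $\gtrsim r^{-d}$. This yields $\Phib(r\e)\gtrsim(2-\beta)r^{-d-\beta}$ with no density estimate at all.

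Your version needs a lower bound on the small-jump density $p_Y$ near $0$, uniform in $\beta\in[1,2)$. You then need the convolution bound $\Phib(x)\ge e^{-\lambda}\int p_Y(x-z)\,c_{\beta,d}|z|^{-d-\beta}\mathbf 1_{\{|z|>1\}}\dd z$, because the event you describe by itself only bounds $\Pb(X_1\in B(x,1+M))$. This is true, since the exponent of $Y$ is $\asymp\min(|\xi|^2,|\xi|^\beta)$ uniformly, but it is extra work.

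\emph{The splitting radius.} The paper simply splits at $|\sigma|=1$, which already gives the stated rate. Your split at $R=\sqrt{\log(e/(2-\beta))}$ is worth keeping if, on $1\le|\sigma|\le R$, you use $H(v)\le v\lesssim|\sigma|^2$ instead of $v^2/2$. That region then contributes $\lesssim R^{2-\beta}-1\lesssim(2-\beta)\log\log\frac{e}{2-\beta}$. The region $|\sigma|>R$ contributes $O(2-\beta)$, because $R^{-\beta}\log\frac1{2-\beta}\le R^{2-\beta}$ is bounded. So the same ingredients give the sharper estimate $\CLY(\beta,d)-\frac d\beta=O\bigl((2-\beta)(1+\log\log\frac{e}{2-\beta})\bigr)$.
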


\begin{remark}
(1) The choice of $1$ as the lower endpoint of the range of $\beta$ in \eqref{main-rate} is not essential. More precisely, for every fixed $\beta_{0}\in(0,2)$, there exists a constant $K_{d,\beta_{0}}>0$, depending only on $d$ and $\beta_{0}$, such that
$$\frac{d}{\beta}<C_{\mathrm{LY}}(\beta,d)\le\frac{d}{\beta}+K_{d,\beta_{0}}(2-\beta)\log\frac{e}{2-\beta},\quad\beta_{0}\leq\beta<2.$$
Indeed, this can be proved by the same approach as in the proof of Theorem \ref{main}, provided that the corresponding constants are allowed to depend additionally on $\beta_{0}$.  We chosen $\beta_{0}=1$ only for convenience, which is sufficient for studying the limit as $\beta\to2^-$. We also note that,  by Lemma \ref{main-decomposition}, the strict lower bound
$$C_{\mathrm{LY}}(\beta,d)>\frac{d}{\beta}$$
holds for every $0<\beta<2$.

(2) The logarithmic factor in \eqref{main-rate} comes from the fact that the polynomial tail coefficient of the stable heat kernel degenerates like $2-\beta$ as $\beta\to2^-$.
\end{remark}

\textbf{Structure.} The remainder of the paper is organized as follows. In Section~\ref{sec-preparations}, we develop the preliminary results and keys estimates needed for the proof. Using the Bochner subordination representation, we first establish concavity, subadditivity, and uniform growth properties of the logarithmic radial profile of the fractional heat kernel. We then prove that the supremum in the Weber--Zacher formula \eqref{CLY-def} is attained at the origin. Moreover, particular attention is paid to a large-distance lower bound for the fractional heat kernel that retains the crucial factor $2-\beta$. In Section~\ref{sec-proof}, these estimates are combined to prove Theorem~\ref{main}.

\textbf{Notation.} Throughout the paper, $d\geq1$ is fixed. $\R^d$ is the $d$-dimensional Euclidean space with inner product $\langle\cdot,\cdot\rangle$ and norm $|\cdot|$. Let $\sphere$ denote the unit sphere in $\R^d$, and let $|\sphere|$ denote its volume. For a set $A$, let $\mathbf{1}_A$ denote its indicator function. For $\beta\in(0,2)$, set
$$\alpha=\frac\beta2\in(0,1).$$

\section{Preparations and key estimates}\label{sec-preparations}\hskip\parindent
Let $S^\alpha=(S_t^\alpha)_{t\geq0}$ be the standard $\alpha$-stable subordinator,  i.e., a non-decreasing c\'{a}dl\'{a}g process, defined on some probability space $(\Omega,\mathcal{F},\Pb)$, taking values in $[0, \infty)$, with stationary, independent increments,  and $S_0^\alpha=0$ almost surely, whose Laplace exponent is $\lambda\mapsto \lambda^\alpha$, i.e,
\begin{equation}\label{laplace-subordinator}
       \E e^{-\lambda S_t^\alpha}=e^{-t\lambda^\alpha},\quad t,\lambda\geq0.
\end{equation}
see e.g. \cite[(1.2)]{LiWu2026}. The L\'{e}vy--Khintchine representation of the Bernstein function $\lambda\mapsto \lambda^\alpha$ (see e.g. \cite[(1.1)]{LiWu2026}) is
\begin{equation}\label{LK}
\lambda^\alpha= \int_0^\infty(1-e^{-\lambda y})\,\nu_\alpha({\rm d}y),\quad \lambda\geq0,
\end{equation}
where $\nu_\alpha$ is the L\'{e}vy measure given by
\begin{align}\label{levy-mass}
\dd\nu_\alpha(y)=-\frac{1}{\Gamma(-\alpha)}y^{-1-\alpha}\mathbf{1}_{(0,\infty)}(y)\dd y.
\end{align}
For general treatments of subordinators and L\'evy processes, refer to the monographs \cite{SSV2012,Appl2009,Sato1999}.

We next calculate the negative moments of $S_1^\alpha$. First, by \eqref{laplace-subordinator} and the dominated convergence theorem,
$$\Pb(S_1^\alpha=0) =\lim_{\lambda\to\infty} \E e^{-\lambda S_1^\alpha}=\lim_{\lambda\to\infty}e^{-\lambda^\alpha} =0.$$
Let $\kappa>0$. Using the well-known identity
$$b^{-\kappa} =\frac{1}{\Gamma(\kappa)}\int_0^\infty r^{\kappa-1}e^{-rb}\dd r,\quad b>0,$$
together with the Fubini--Tonelli theorem and \eqref{laplace-subordinator}, we obtain
\begin{align}\label{negative-moment}
\E\bigl(S_1^\alpha\bigr)^{-\kappa}&=\frac{1}{\Gamma(\kappa)}\int_0^\infty r^{\kappa-1}\E e^{-rS_1^\alpha}\dd r\cr
&=\frac{1}{\Gamma(\kappa)}\int_0^\infty r^{\kappa-1}e^{-r^\alpha}\dd r\cr
&=\frac{\Gamma(\kappa/\alpha)}{\alpha\Gamma(\kappa)}<\infty.
\end{align}
Let $\Pb(S_t^\alpha\in\cdot)$ denote the law of $S_t^\alpha$. The Bochner subordination formula gives
\begin{align}\label{subordination}
\Phib(x)&=\int_0^\infty p_s(x)\,\Pb(S_1^\alpha\in{\rm d} s)\cr
&=\E\left[(4\pi S_1^\alpha)^{-d/2}\exp\Big(-\frac{|x|^2}{4S_1^\alpha}\Big)\right],
\end{align}
where $(p_t)_{t>0}$ is the heat kernel given by \eqref{Gauss-hk}. This formula is standard; see \cite[Theorem 18.3]{Garofalo2019} for the heat kernel formulation, \cite[Chapter~13]{SSV2012} for the semigroup formulation, and \cite[Section~30]{Sato1999} for subordinated  L\'evy processes.

Set
\begin{align*}\label{F-beta}
F_\beta(r):=\Phib(\sqrt{r} \e),\quad r\geq0,
\end{align*}
where $\e$ is any fixed unit vector in $\R^d$. By \eqref{subordination},
\begin{equation}\label{F-Laplace}
       F_\beta(r)=\int_0^\infty e^{-r/(4s)}(4\pi s)^{-d/2}\,\Pb(S_1^\alpha\in{\rm d} s),\quad r\geq0.
\end{equation}
Define
\begin{equation}\label{psi-def}
       \psib(r)=\log\frac{F_\beta(0)}{F_\beta(r)},\quad r\geq0,
\end{equation}
Since $\Phib$ is radial, the definition of $\psib$ is independent of the chosen unit vector.

We also need the following properties of the Gamma function $\Gamma$; see e.g.  \cite[Section 4]{Garofalo2019} and \cite{Lebe72} for more details.
\begin{align}\label{gamma}
&\Gamma(1+t)=t\Gamma(t),\quad t>0\cr
&\Gamma(1-t)=-t\Gamma(-t),\quad 0<t<1,
\end{align}
 and moreover, $\Gamma$ is infinitely differentiable on $(0,\infty)$.

We first present some key properties of $\psib$ in the following lemma.
\begin{lemma}\label{psi-concave}
For every $\beta\in(0,2)$, the function $\psib$ has the following properties.
\begin{enumerate}[label=\textup{(\roman*)}]
\item $\psib(0)=0$.

\item $\psib$ is non-negative and non-decreasing on $[0,\infty)$.

\item $\psib$ is concave on $[0,\infty)$.

\item $\psib$ is subadditive, that is,
$$\psib(r+s)\leq \psib(r)+\psib(s),\quad r,s\geq0.$$

\item There exists a constant $M_d>0$, depending only on $d$, such that
\begin{equation*}\label{psi-lip-1}
       0\leq\psib(r)\leq M_d r,\quad r\geq0,\,1\leq\beta<2.
\end{equation*}
\end{enumerate}
\end{lemma}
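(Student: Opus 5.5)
The plan rests on the Laplace-transform representation \eqref{F-Laplace}. Substituting $u=1/(4s)$, we write $F_\beta(r)=\int_{[0,\infty)} e^{-ru}\,\mu(\dd u)$ for a finite positive measure $\mu$, namely the image of $(4\pi s)^{-d/2}\Pb(S_1^\alpha\in\dd s)$ under $s\mapsto 1/(4s)$. Its total mass is $\mu([0,\infty))=(4\pi)^{-d/2}\E(S_1^\alpha)^{-d/2}<\infty$ by \eqref{negative-moment}, so $F_\beta(0)$ is finite and positive. Moreover, $\int u\,\mu(\dd u)=(4\pi)^{-d/2}\tfrac14\E(S_1^\alpha)^{-d/2-1}<\infty$, again by \eqref{negative-moment}. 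This justifies differentiating under the integral for $r\ge0$, including at $r=0$.

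Parts (i)--(iv) follow directly from this representation.

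\textbf{(i)} This is immediate from \eqref{psi-def}.

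\textbf{(ii)} $F_\beta$ is non-increasing in $r$, since $e^{-ru}$ is, and it is positive. Hence $\psib=\log F_\beta(0)-\log F_\beta$ is non-decreasing and $\psib\ge\psib(0)=0$.

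\textbf{(iii)} Concavity of $\psib$ is equivalent to log-convexity of $F_\beta$. This holds either by Hölder's inequality,
\[
F_\beta(\theta r_1+(1-\theta)r_2)=\int (e^{-r_1u})^{\theta}(e^{-r_2u})^{1-\theta}\mu(\dd u)\le F_\beta(r_1)^\theta F_\beta(r_2)^{1-\theta},
\]
or by Cauchy--Schwarz in the form $F_\beta''F_\beta\ge (F_\beta')^2$. Since $\psib$ is continuous on $[0,\infty)$, concavity holds on the closed half-line.

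\textbf{(iv)} Subadditivity follows from concavity together with $\psib(0)=0$. For $r,s>0$, concavity gives $\psib(r)\ge \frac{r}{r+s}\psib(r+s)+\frac{s}{r+s}\psib(0)$, and likewise for $\psib(s)$. Adding these two inequalities yields the claim; the cases $r=0$ or $s=0$ are trivial.

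\textbf{(v)} By concavity, $\psib(r)\le \psib(0)+\psib'(0^+)r=\psib'(0^+)r$. We have
\[
\psib'(0)=-\frac{F_\beta'(0)}{F_\beta(0)}=\frac{\int u\,\mu(\dd u)}{\mu([0,\infty))}=\frac{1}{4}\,\frac{\E(S_1^\alpha)^{-d/2-1}}{\E(S_1^\alpha)^{-d/2}}.
\]
Using \eqref{negative-moment} with $\kappa=d/2+1$ and $\kappa=d/2$, this equals
\[
\frac14\cdot\frac{\Gamma\bigl((d+2)/\beta\bigr)\,\Gamma(d/2)}{\Gamma(d/2+1)\,\Gamma(d/\beta)}
=\frac{1}{2d}\cdot\frac{\Gamma\bigl((d+2)/\beta\bigr)}{\Gamma(d/\beta)}.
\]
For $\beta\in[1,2)$ the arguments $(d+2)/\beta\in((d+2)/2,\,d+2]$ and $d/\beta\in(d/2,\,d]$ lie in compact subintervals of $(0,\infty)$. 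By continuity and positivity of $\Gamma$ there, the ratio is bounded by a constant depending only on $d$. We therefore take
\[
M_d:=\frac{1}{2d}\cdot\frac{\max_{[(d+2)/2,\,d+2]}\Gamma}{\min_{[d/2,\,d]}\Gamma}.
\]

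The only delicate point is the analytic justification of differentiating at $r=0$ and of using $\psib'(0^+)$ as the slope bound. This is handled by finiteness of the first moment of $\mu$ (equivalently, finiteness of the negative moment $\E(S_1^\alpha)^{-d/2-1}$) together with dominated convergence. Alternatively, one can avoid derivatives: concavity makes $\psib(r)/r$ non-increasing, so it suffices to bound $\lim_{r\downarrow0}\psib(r)/r$, and that limit is computed by monotone convergence.
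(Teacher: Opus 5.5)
Your proposal is correct and follows essentially the same route as the paper: H\"older's inequality gives log-convexity of $F_\beta$, the monotonicity of $\psib(r)/r$ gives subadditivity, and the slope bound $\psib(r)\le\psib'(0+)\,r$ with $\psib'(0+)=\Gamma((d+2)/\beta)/\bigl(2d\,\Gamma(d/\beta)\bigr)$ comes from the negative moments \eqref{negative-moment}. Your rewriting as a Laplace transform via $u=1/(4s)$ and your explicit choice of $M_d$ as a ratio of extrema of $\Gamma$ are only cosmetic refinements of the same argument.
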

\begin{proof}
Clearly, (i) is true. From \eqref{F-Laplace}, we easily see that $F_\beta$ is non-increasing, and hence,
$$\psib(r)=\log F_\beta(0)-\log F_\beta(r)$$
 is non-decreasing. Thus, $\psib(r)\geq\psib(0)=0$ for any $r\geq0$. This proves (ii).

We turn to show (iii).  Let $0\leq\theta\leq1$ and $r_1,r_2\geq0$.  By H\"older's inequality, we have
\begin{align*}
&F_\beta(\theta r_1+(1-\theta)r_2)\\
&=\int_0^\infty e^{-\frac{\theta r_1}{4s}} e^{-\frac{(1-\theta)r_2}{4s}}(4\pi s)^{-d/2}\,\Pb(S_1^\alpha\in{\rm d} s) \\
&\leq\left(\int_0^\infty e^{-\frac{ r_1}{4s}}  (4\pi s)^{-d/2}\,\Pb(S_1^\alpha\in{\rm d} s)\right)^\theta
       \left(\int_0^\infty  e^{-\frac{r_2}{4s}}(4\pi s)^{-d/2}\,\Pb(S_1^\alpha\in{\rm d} s)\right)^{1-\theta}  \\
&=F_\beta(r_1)^\theta F_\beta(r_2)^{1-\theta}.
\end{align*}
Thus, $\log F_\beta$ is convex, which immediately implies that $\psib$ is concave.

Now we prove (iv).  Let $r,s>0$. Since $\psib$ is a non-negative concave function on $[0,\infty)$ with $\psib(0)=0$, $r\mapsto \psib(r)/r$ is non-increasing on $(0,\infty)$.  Thus,
$$\frac{\psib(r)}{r}\geq \frac{\psib(r+s)}{r+s},\quad\frac{\psib(s)}{s}\geq \frac{\psib(r+s)}{r+s}.$$
Multiplying by $r$ and $s$, respectively, and adding, we have
$$\psib(r)+\psib(s)\geq \psib(r+s).$$
The cases $r=0$ or $s=0$ are immediate.

It remains only to prove (v).  By differentiating \eqref{psi-def},
\begin{align*}
       \psib'(0+)= -\frac{F_\beta'(0+)}{F_\beta(0)} &=\frac14\frac{\E (S_1^\alpha)^{-d/2-1}}{\E (S_1^\alpha)^{-d/2}}\\
       &=\frac{\Gamma((d+2)/\beta)}{2d\,\Gamma(d/\beta)},
\end{align*}
where the existence of the right derivative of $F_\beta'(0+)$ is an easy consequence of \eqref{negative-moment} and the dominated convergence theorem. As a function of $\beta$, the last term is continuous on the closed interval $[1,2]$, and hence, it is bounded for $1\leq\beta<2$.  Since $\psib$ is concave, non-negative, and $\psib(0)=0$, we have
$$\psib(r)\leq \psib'(0+)r,\quad r\geq0.$$
Letting $M_d=\sup_{\beta\in[1,2)}\frac{\Gamma((d+2)/\beta)}{2d\,\Gamma(d/\beta)}$ finishes the proof.
\end{proof}

The next result is important.  It shows that the supremum in \eqref{CLY-def} is  attained at the origin for every $0<\beta<2$. For $\beta=1$, this was verified directly by using the explicit Cauchy heat kernel in \cite[Proposition~3.3]{WeberZacher2023}.  Proposition \ref{sup-origin} proves the same fact for every $0<\beta<2$ by using subordination and concavity.
\begin{proposition}\label{sup-origin}
For every $0<\beta<2$,
\begin{equation}\label{sup-origin-1}
\CLY(\beta,d)=L_\beta(\log\Phib)(0) =c_{\beta,d}\int_{\R^d}\frac{\psib(|h|^2)}{|h|^{d+\beta}}\dd h.
\end{equation}
\end{proposition}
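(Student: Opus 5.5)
We write $\log\Phib(x)=\log F_\beta(0)-\psib(|x|^2)$. Then, by \eqref{Abeta}, for every $y\in\R^d$,
\begin{equation*}
L_\beta(\log\Phib)(y)=\frac{c_{\beta,d}}{2}\int_{\R^d}\frac{\psib(|y+h|^2)+\psib(|y-h|^2)-2\psib(|y|^2)}{|h|^{d+\beta}}\dd h ,
\end{equation*}
since the constant $\log F_\beta(0)$ cancels in the second difference. The plan is to show that the integrand (numerator) is maximized, pointwise in $h$, at $y=0$. At $y=0$ the numerator equals $2\psib(|h|^2)$ because $\psib(0)=0$, and this gives the second expression in \eqref{sup-origin-1}. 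Combining with \eqref{CLY-Abeta}, it then suffices to prove the pointwise inequality
\begin{equation*}
\psib(|y+h|^2)+\psib(|y-h|^2)\leq 2\psib(|y|^2)+2\psib(|h|^2),\qquad y,h\in\R^d .
\end{equation*}

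The key step is to prove this inequality using Lemma \ref{psi-concave}. Set $a=|y+h|^2$ and $b=|y-h|^2$. By the parallelogram law, $a+b=2|y|^2+2|h|^2$. Concavity (Lemma \ref{psi-concave}(iii)) gives
\begin{equation*}
\psib(a)+\psib(b)\leq 2\psib\Big(\frac{a+b}{2}\Big)=2\psib(|y|^2+|h|^2).
\end{equation*}
Subadditivity (Lemma \ref{psi-concave}(iv)) then bounds this by $2\psib(|y|^2)+2\psib(|h|^2)$. Hence the numerator at $y$ is at most $2\psib(|h|^2)$, which is the numerator at $y=0$. Since $\psib\ge0$, that numerator is nonnegative. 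It follows that $L_\beta(\log\Phib)(y)\le L_\beta(\log\Phib)(0)$ whenever the right-hand side is finite.

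The remaining concern is justifying the integrals. The integral at $0$ must converge, and the formula \eqref{Abeta} must be valid for $\log\Phib$. Near $h=0$ there is no problem: $\psib(|h|^2)\le \psib'(0+)|h|^2$, which is integrable against $|h|^{-d-\beta}$. Here we use the bound $\psib(r)\le\psib'(0+)r$ from the proof of Lemma \ref{psi-concave}(v), which holds for every $\beta$ because $\psib'(0+)$ is finite by \eqref{negative-moment}. Near infinity we need growth control on $\psib$. This is the step I expect to be the main obstacle: we need $\psib(r)=O(\log r)$, or at least $o(r^{\beta/2})$, as $r\to\infty$. I would get this from a crude lower bound on $F_\beta$ via \eqref{F-Laplace}: restrict the expectation to the event $\{S_1^\alpha\ge r\}$, which gives $F_\beta(r)\ge e^{-1/4}(4\pi)^{-d/2}\E[(S_1^\alpha)^{-d/2};S_1^\alpha\in[r,2r]]$. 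Then use the tail $\Pb(S_1^\alpha>r)\sim c\,r^{-\alpha}$, which follows from \eqref{levy-mass}. Together these yield $F_\beta(r)\ge c\,r^{-d/2-\alpha}$, hence $\psib(r)\le C+(d/2+\alpha)\log r$. This makes $\int_{|h|>1}\psib(|h|^2)|h|^{-d-\beta}\dd h$ finite.

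Finally, the inequality $L_\beta(\log\Phib)(y)\le L_\beta(\log\Phib)(0)$ for all $y$, with equality at $y=0$, shows that the supremum in \eqref{CLY-Abeta} is attained at the origin. The integral at $y=0$ reduces to $c_{\beta,d}\int\psib(|h|^2)|h|^{-d-\beta}\dd h$, as claimed in \eqref{sup-origin-1}.
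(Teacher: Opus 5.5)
Your proof is correct and is essentially the paper's argument: the parallelogram law together with concavity of $\psib$ gives $\psib(|y+h|^2)+\psib(|y-h|^2)\le 2\psib(|y|^2+|h|^2)$, and subadditivity then bounds the second difference at $y$ pointwise by the second difference at the origin. Your extra check that $\int_{\R^d}\psib(|h|^2)|h|^{-d-\beta}\dd h<\infty$ for every $\beta\in(0,2)$ (linear bound near $h=0$, logarithmic growth of $\psib$ from the $r^{-\alpha}$ tail of $S_1^\alpha$) is something the paper leaves implicit. One small correction: mere $o(r^{\beta/2})$ growth would not suffice for that integral, but the $O(\log r)$ bound you actually derive does.
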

\begin{proof}
Fix $y,h\in\R^d$. Observe that $\Phib$ is radial and by \eqref{psi-def},
\begin{align}\label{eq-Phi-psi}
\log\Phib(z)=\log\Phib(0)-\psib(|z|^2),\quad z\in\R^d.
\end{align}
Then
\begin{align*}
 &2\log\Phib(y)-\log\Phib(y+h)-\log\Phib(y-h)\cr
 &=\psib(|y+h|^2)+\psib(|y-h|^2)-2\psib(|y|^2)\cr
 &=\psib(|y|^2+ |h|^2+ 2\langle y,h\rangle)+\psib(|y|^2+|h|^2-2\langle y,h\rangle)-2\psib(|y|^2).
\end{align*}
Combining this, by the concavity and subadditivity of $\psib$ (see Lemma \ref{psi-concave} (iii)--(iv)), we obtain
\begin{align*}
&2\log\Phib(y)-\log\Phib(y+h)-\log\Phib(y-h)\cr
&\leq 2\psib\left(\frac{(|y|^2+ |h|^2+ 2\langle y,h\rangle)+ (|y|^2+|h|^2-2\langle y,h\rangle) }{2}\right) -2\psib(|y|^2)\cr
&=2\left[\psib(|y|^2+ |h|^2)-\psib(|y|^2)\right]\cr
&\leq 2\psib(|h|^2)\cr
&=2\log\Phib(0)-\log\Phib(h)-\log\Phib(-h),
\end{align*}
where we also used the radial symmetry in the last equality. Multiplying by the term $c_{\beta,d}/(2|h|^{d+\beta})$ and integrating in $h$ leads to
\begin{align*}
L_\beta(\log\Phib)(y)&\leq L_\beta(\log\Phib)(0)\cr
 &=\frac{c_{\beta,d}}{2}\int_{\R^d}\frac{2\log\Phib(0)-\log\Phib(h)-\log\Phib(-h)}{|h|^{d+\beta}}\dd h  \\
&=c_{\beta,d}\int_{\R^d}\frac{\psib(|h|^2)}{|h|^{d+\beta}}\dd h.
\end{align*}
Taking the supremum over $y\in\R^d$ proves that the supremum in \eqref{CLY-Abeta} is attained at $y=0$, and \eqref{sup-origin-1} follows.
\end{proof}

Now we give a decomposition of the constant $\CLY(\beta,d)$ via comparing $L_\beta(\log\Phib)(0)$ with $L_\beta\Phib(0)/\Phib(0)$. For $a\geq0$, set
$$ H(a):=a-1+e^{-a}.$$
\begin{lemma}\label{main-decomposition}
For every $\beta\in(0,2)$,
\begin{equation}\label{decomp}
       \CLY(\beta,d)=\frac d\beta+\mathcal R_{\beta,d},
\end{equation}
where
\begin{equation}\label{R-def}
       \mathcal R_{\beta,d}
       =c_{\beta,d}\int_{\R^d}
       \frac{H\bigl(\psib(|h|^2)\bigr)}{|h|^{d+\beta}}\dd h.
\end{equation}
Moreover, $\mathcal R_{\beta,d}>0$ for any $\beta\in(0,2)$.
\end{lemma}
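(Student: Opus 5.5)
Write $\psi=\psib(|h|^2)$ and use the identity $\psi = H(\psi) + 1 - e^{-\psi}$. By Proposition \ref{sup-origin},
\[
\CLY(\beta,d)=c_{\beta,d}\int_{\R^d}\frac{\psib(|h|^2)}{|h|^{d+\beta}}\dd h
=c_{\beta,d}\int_{\R^d}\frac{1-e^{-\psib(|h|^2)}}{|h|^{d+\beta}}\dd h+\mathcal R_{\beta,d}.
\]
Splitting the integral is legitimate because both integrands are non-negative. Indeed, $H(a)\ge0$ for $a\ge0$, since $H(0)=0$ and $H'(a)=1-e^{-a}\ge0$. Also $1-e^{-\psi}\ge0$ by Lemma \ref{psi-concave}(ii). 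Convergence of each piece then follows from finiteness of $\CLY$. For that, Lemma \ref{psi-concave}(v) (or more simply the bound $\psib(r)\le\psib'(0+)r$ from its proof, valid for all $\beta$) gives $\psi\le C|h|^2$ near the origin. At infinity, $1-e^{-\psi}\le 1$, and $H(\psi)\le\psi$ grows at most logarithmically in $|h|$ because of the polynomial tail of $\Phib$. Alternatively one can bound $H(\psi)\le \psi$ and use only that the original integral is finite, which Weber--Zacher's constant presupposes.

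Next I evaluate the first integral. Since $e^{-\psib(|h|^2)}=\Phib(h)/\Phib(0)$, the integrand is $(\Phib(0)-\Phib(h))/\Phib(0)$, and by radial symmetry
\[
c_{\beta,d}\int_{\R^d}\frac{1-e^{-\psib(|h|^2)}}{|h|^{d+\beta}}\dd h=\frac{L_\beta\Phib(0)}{\Phib(0)}.
\]
Now $\Phib$ is the heat kernel at time $1$, so $L_\beta\Phib(0)=-\partial_t G^{(\beta)}(t,0)|_{t=1}$. The self-similarity \eqref{self-sim} gives $G^{(\beta)}(t,0)=t^{-d/\beta}\Phib(0)$, hence $-\partial_tG^{(\beta)}(1,0)=\frac d\beta\Phib(0)$. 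The ratio is therefore $d/\beta$, which proves \eqref{decomp}.

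The one point that needs care is that the heat equation $\partial_tG=-L_\beta G$ holds pointwise at $x=0$ with the singular-integral normalization \eqref{Abeta}. This is standard: $\Phib$ is smooth and bounded with bounded second derivatives, so the integral converges absolutely, and the pointwise form agrees with the Fourier symbol $|\xi|^\beta$ under the stated constant $c_{\beta,d}$. As a cross-check, one can compute directly via Fourier inversion: $\Phib(0)=(2\pi)^{-d}\int e^{-|\xi|^\beta}\dd\xi$ and $L_\beta\Phib(0)=(2\pi)^{-d}\int|\xi|^\beta e^{-|\xi|^\beta}\dd\xi$. In polar coordinates the ratio of these is $\Gamma(d/\beta+1)/\Gamma(d/\beta)=d/\beta$.

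Finally, for strict positivity of $\mathcal R_{\beta,d}$: we have $H(a)>0$ for $a>0$, since $H$ is strictly increasing on $(0,\infty)$. So it suffices that $\psib(r)>0$ for $r>0$, i.e.\ $F_\beta(r)<F_\beta(0)$. This holds by \eqref{F-Laplace}, because $e^{-r/(4s)}<1$ for every $s>0$ and $S_1^\alpha>0$ almost surely. The integrand of $\mathcal R_{\beta,d}$ is then strictly positive on $\R^d\setminus\{0\}$, so $\mathcal R_{\beta,d}>0$. The main obstacle is only the justification of the identification $L_\beta\Phib(0)=\frac d\beta\Phib(0)$; the Fourier computation above is the route I would write out.
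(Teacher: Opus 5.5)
Your proposal is correct and follows essentially the same route as the paper. You combine Proposition \ref{sup-origin} with the identity $a=H(a)+1-e^{-a}$, identify the $1-e^{-\psib}$ part as $L_\beta\Phib(0)/\Phib(0)$, and evaluate this ratio as $d/\beta$ via self-similarity and the heat equation at $t=1$, $x=0$. Your Fourier-side check $\Gamma(d/\beta+1)/\Gamma(d/\beta)=d/\beta$ and your justification of $\psib(r)>0$ for $r>0$ via $\Pb(S_1^\alpha=0)=0$ are useful extra details that the paper leaves implicit.
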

\begin{proof}
By  \eqref{self-sim},
$$ G^{(\beta)}(t,0)=t^{-d/\beta}\Phib(0).$$
Thus,
$$\partial_t\big|_{t=1}G^{(\beta)}(t,0) =-\frac d\beta\Phib(0).$$
Since $\Phib(x)=G^{(\beta)}(1,x)$, and $G^{(\beta)}$ solves the fractional heat equation
\begin{equation*}\label{heat-kernel-eq}
       \partial_tG^{(\beta)}(t,x)=-L_\beta G^{(\beta)}(t,\cdot)(x),
\end{equation*}
we have
\begin{equation}\label{eq:Abeta-Phi-identity}
       \frac{L_\beta\Phib(0)}{\Phib(0)}=\frac{-\partial_t|_{t=1}G^{(\beta)}(t,0)}{\Phib(0)}=\frac d\beta.
\end{equation}
On the other hand, by \eqref{Abeta} and the radial symmetry of $\Phib$, we obtain
\begin{align}\label{eq:Abeta-Phi-integral}
       \frac{L_\beta\Phib(0)}{\Phib(0)}
       &=c_{\beta,d}\int_{\R^d}
       \frac{\Phib(0)-\Phib(h)}{\Phib(0)|h|^{d+\beta}}\dd h\notag\\
       &=c_{\beta,d}\int_{\R^d}
       \frac{1-e^{-\psib(|h|^2)}}{|h|^{d+\beta}}\dd h,
\end{align}
where the last equality applied \eqref{eq-Phi-psi}. Combining \eqref{eq:Abeta-Phi-identity} and \eqref{eq:Abeta-Phi-integral}, we obtain
\begin{equation*}
       \frac d\beta
       =c_{\beta,d}\int_{\R^d}
       \frac{1-e^{-\psib(|h|^2)}}{|h|^{d+\beta}}\dd h.
\end{equation*}
Combing this with Proposition \ref{sup-origin} yields \eqref{decomp} and \eqref{R-def}.

Finally, it is easy to observe that $H(0)=0$ and $H'(a)=1-e^{-a}\geq0$ for $a\geq0$, so $H(a)\geq0$ on $[0,\infty)$.  Since $\psib(|h|^2)>0$ for $h\ne0$ and $H(a)>0$ for $a>0$, we have $\mathcal R_{\beta,d}>0$.
\end{proof}

In the sequel, we shall repeatedly use the elementary estimate
\begin{equation}\label{H-estimate}
0\leq H(a) \leq \min\left\{a,\frac{a^2}{2}\right\},\quad a\geq0.
\end{equation}
Indeed, for $a\geq0$, $H(a)\leq a$ is clear, and $H(a)\leq a^2/2$ follows from the elementary inequality $e^{-a}\leq1-a+a^2/2$.

The next result establish an upper bound on the normalizing constant $c_{\beta,d}$ with a factor $2-\beta$.
\begin{lemma}\label{cbeta}
There exists a constant $K_d>0$, depending only on $d$, such that
\begin{equation*}\label{cbeta-bound}
       c_{\beta,d}\leq K_d(2-\beta),\quad 1\leq\beta<2.
\end{equation*}
\end{lemma}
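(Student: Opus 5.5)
The bound will come directly from the explicit formula
\[
c_{\beta,d}=\frac{2^\beta\Gamma\bigl((d+\beta)/2\bigr)}{\pi^{d/2}|\Gamma(-\beta/2)|}.
\]
The only factor that can produce smallness as $\beta\uparrow2$ is $1/|\Gamma(-\beta/2)|$, since $\Gamma$ has a pole at $-1$. We will isolate this factor with the functional equation \eqref{gamma}, using $\alpha=\beta/2\in[1/2,1)$.

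First, apply $\Gamma(1-t)=-t\Gamma(-t)$ with $t=\alpha\in(0,1)$. This gives $|\Gamma(-\alpha)|=\Gamma(1-\alpha)/\alpha$, because $\Gamma(1-\alpha)>0$. Next, apply $\Gamma(1+s)=s\Gamma(s)$ with $s=1-\alpha>0$. This gives $\Gamma(1-\alpha)=\Gamma(2-\alpha)/(1-\alpha)$. Combining the two identities,
\[
\frac{1}{|\Gamma(-\beta/2)|}=\frac{\alpha(1-\alpha)}{\Gamma(2-\alpha)}=\frac{\beta(2-\beta)}{4\,\Gamma(2-\beta/2)}.
\]
Therefore
\[
c_{\beta,d}=(2-\beta)\cdot\frac{2^\beta\,\beta\,\Gamma\bigl((d+\beta)/2\bigr)}{4\,\pi^{d/2}\,\Gamma(2-\beta/2)}.
\]

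It remains to bound the second factor uniformly for $\beta\in[1,2)$. On the closed interval $[1,2]$, the argument $(d+\beta)/2$ lies in $[(d+1)/2,(d+2)/2]\subset(0,\infty)$, and the argument $2-\beta/2$ lies in $[1,3/2]\subset(0,\infty)$. Since $\Gamma$ is continuous and positive on $(0,\infty)$, the factor is a continuous positive function of $\beta$ on the compact set $[1,2]$. We therefore take $K_d$ to be its maximum over $[1,2]$; it depends only on $d$.

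There is no real obstacle. The only point requiring care is the sign bookkeeping in $\Gamma(-\alpha)<0$, which we handle by working with absolute values as above.
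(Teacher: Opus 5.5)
Your proof is correct and matches the paper's argument: both use the two functional equations of $\Gamma$ to write $c_{\beta,d}=(2-\beta)\,\beta 2^{\beta-2}\Gamma\bigl((d+\beta)/2\bigr)/\bigl(\pi^{d/2}\Gamma(2-\beta/2)\bigr)$. Both then bound the second factor by its maximum over the compact interval $[1,2]$, where it is continuous.
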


\begin{proof}
By using the property of the Gamma function $\Gamma$ (see \eqref{gamma}), we get
\begin{align*}
       c_{\beta,d}
       &=\frac{2^\beta\Gamma\bigl((d+\beta)/2\bigr)}{\pi^{d/2}|\Gamma(-\beta/2)|}  \\
       &=\frac{\beta 2^{\beta-1}\Gamma\bigl((d+\beta)/2\bigr)}{\pi^{d/2}\Gamma(1-\beta/2)}  \\
       &=(2-\beta)\frac{\beta 2^{\beta-2}\Gamma\bigl((d+\beta)/2\bigr)}{\pi^{d/2}\Gamma(2-\beta/2)}\\
       &=:(2-\beta) U_d(\beta).
\end{align*}
The function $\beta\mapsto U_d(\beta)$ is continuous on the closed interval $[1,2]$, so it is bounded. By letting $K_d=\sup_{\beta\in [1,2)}U_d(\beta)$, we finish the proof.
\end{proof}

The following lower bound on $F_\beta$, which may be of independent interest, is precisely where the factor $2-\beta$ must be kept.  Standard two-sided estimates for stable heat kernels often hide this factor in constants, which is not sufficient for the limiting problem considered here. For a comprehensive introduction to Poisson random measures and the L\'evy--It\^o decomposition, see \cite[Chapter 4]{Sato1999} and \cite[Sections 2.3 and 2.4]{Appl2009}.
\begin{lemma}\label{tail-lower}
There exists $k_d>0$, depending only on $d$, such that
\begin{equation*}
F_\beta(r)\geq k_d(2-\beta)r^{-d-\beta},\quad r\geq1,\,1\leq\beta<2.
\end{equation*}
\end{lemma}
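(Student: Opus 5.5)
The plan is to bound $F_\beta(r)$ from below through the subordination formula \eqref{F-Laplace}, restricted to the event that $S_1^\alpha$ is comparable to $r$. For $r\geq1$ and $s\in[r,Ar]$, with $A\geq1$ a constant depending only on $d$, we have $e^{-r/(4s)}\geq e^{-1/4}$ and $(4\pi s)^{-d/2}\geq (4\pi A r)^{-d/2}$. Hence
\begin{equation*}
F_\beta(r)\geq e^{-1/4}(4\pi A)^{-d/2}\, r^{-d/2}\,\Pb\bigl(S_1^\alpha\in[r,Ar]\bigr).
\end{equation*}
It therefore suffices to prove
\begin{equation*}
\Pb\bigl(S_1^\alpha\in[r,Ar]\bigr)\geq c(2-\beta)r^{-\alpha}
\end{equation*}
uniformly in $1\leq\beta<2$ and $r\geq1$. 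That gives $F_\beta(r)\geq k_d(2-\beta)r^{-(d+\beta)/2}$, which is at least $k_d(2-\beta)r^{-d-\beta}$ because $r\geq1$.

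For the probability estimate I will use the L\'evy--It\^o decomposition. Since $S^\alpha$ has no drift, $S_1^\alpha=\sum_{u\leq1}\Delta S_u^\alpha$, the sum of its jumps up to time $1$. The jump counting measure $N$ on $[0,1]\times(0,\infty)$ is Poisson with intensity $\dd u\,\nu_\alpha(\dd y)$. I split $S_1^\alpha=X+Y$, where $X$ is the sum of jumps of size $\leq1$ and $Y$ is the sum of jumps of size $>1$. Consider the event $E=E_1\cap E_2\cap E_3$, where:
\begin{itemize}
\item $E_1=\{X\leq C_0\}$;
\item $E_2$ is the event that $N$ has exactly one point with jump size in $[r,2r]$;
\item $E_3$ is the event that $N$ has no point with jump size in $(1,\infty)\setminus[r,2r]$ (for $r=1$ this means jump sizes in $(2,\infty)$).
\end{itemize}
On $E$ we have $r\leq S_1^\alpha\leq 2r+C_0\leq(2+C_0)r$, so we take $A=2+C_0$. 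The three events are determined by $N$ restricted to disjoint sets, hence they are independent, and
\begin{equation*}
\Pb(E)=\Pb(E_1)\Pb(E_2)\Pb(E_3).
\end{equation*}

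Now I estimate each factor uniformly for $\alpha\in[1/2,1)$, using $\nu_\alpha([a,\infty))=a^{-\alpha}/\Gamma(1-\alpha)$, which follows from \eqref{levy-mass} and \eqref{gamma}.
\begin{itemize}
\item By \eqref{gamma}, $\E X=\int_0^1y\,\nu_\alpha(\dd y)=\alpha/\Gamma(2-\alpha)$, which is bounded on $[1/2,1]$. Choosing $C_0=2\sup_\alpha \E X$, Markov's inequality gives $\Pb(E_1)\geq1/2$.
\item Since $\Gamma(1-\alpha)\geq\inf_{(0,1/2]}\Gamma>0$, we have $\nu_\alpha((1,\infty))\leq C$, so $\Pb(E_3)\geq e^{-C}$.
\item With $\mu=\nu_\alpha([r,2r])=(1-2^{-\alpha})r^{-\alpha}/\Gamma(1-\alpha)$, we get $\Pb(E_2)=\mu e^{-\mu}\geq e^{-C}\mu$, because $\mu\leq C$.
\end{itemize}

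The key step, and the only place where care is needed, is keeping the factor $2-\beta$ explicit in $\mu$. From \eqref{gamma}, $\Gamma(1-\alpha)=\Gamma(2-\alpha)/(1-\alpha)=2\Gamma(2-\alpha)/(2-\beta)$. Since $\Gamma(2-\alpha)$ is bounded on $[1/2,1]$, this gives $1/\Gamma(1-\alpha)\geq c(2-\beta)$. Also $1-2^{-\alpha}\geq1-2^{-1/2}$. Together, $\mu\geq c(2-\beta)r^{-\alpha}$.

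Multiplying the three factors gives the required bound on $\Pb(S_1^\alpha\in[r,Ar])$. Every constant here depends only on $d$ (through $A$ in the first display), and not on $\beta\in[1,2)$.
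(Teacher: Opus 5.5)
Your proof is correct and is essentially the paper's argument. It uses the L\'evy--It\^o splitting of $S_1^\alpha$ into small and large jumps, Markov's inequality for the small jumps, exactly one large jump in a dyadic window and none elsewhere (independent by disjointness of the Poisson sets), and the identity $1/\Gamma(1-\alpha)=(1-\alpha)/\Gamma(2-\alpha)$ to keep the factor $2-\beta$.

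The only difference is where you place the window. You use $[r,2r]$, which matches the literal definition $F_\beta(r)=\Phi_\beta(\sqrt{r}\,\mathbf{e})$. The paper's computation, with $e^{-r^2/(4S_1^\alpha)}$ and window $(r^2,2r^2]$, actually bounds $F_\beta(r^2)=\Phi_\beta(r\,\mathbf{e})$, which is the form used in Lemma~\ref{psi-tail}. Your intermediate estimate $F_\beta(r)\geq k_d(2-\beta)r^{-(d+\beta)/2}$ is precisely that sharper version, so nothing is lost.
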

\begin{proof}
Let \(\mathcal N_\alpha\) be a Poisson random measure on $(0,1]\times(0,\infty)$ with intensity measure ${\rm d}t\,\nu_\alpha({\rm d} s),$ where $\nu_\alpha$ is given by \eqref{levy-mass}. By the L\'evy--It\^o decomposition (see \cite[Theorem 2.4.16]{Appl2009}), we can write $S_1^\alpha=Y_\alpha+Z_\alpha$ almost surely, where the small-jump part $Y_\alpha$ and the large-jump part $Z_\alpha$ are given by
$$Y_\alpha:=\int_{(0,1]\times(0,1]}s\,\mathcal N_\alpha({\rm d} t,{\rm d} s),\quad Z_\alpha:=\int_{(0,1]\times(1,\infty)}s\,\mathcal N_\alpha({\rm d} t,{\rm d} s).$$

We now turn to estimate $Y_\alpha$.
\begin{align*}
\E Y_\alpha= \int_0^1 s\,\nu_\alpha({\rm d}s)
&=\frac{\alpha}{\Gamma(1-\alpha)}\int_0^1 s^{-\alpha}\dd s\\
&=\frac{\alpha}{(1-\alpha)\Gamma(1-\alpha)}=\frac{\alpha}{\Gamma(2-\alpha)}.
\end{align*}
As a function of $\alpha$, the right-hand side is bounded on $[1/2,1]$. Then, there exists a constant $C>0$ (independent of $\alpha$) such that
$$\E Y_\alpha\leq C,\quad \alpha\in[1/2,1).$$
Choose $M=2C$.  By Markov's inequality,
$$\Pb(Y_\alpha>M)\leq\frac{\E Y_\alpha}{M}\leq \frac12.$$
Hence,
\begin{equation}\label{Y-M}
\Pb(Y_\alpha\leq M)\geq \frac12,\quad \alpha\in[1/2,1).
\end{equation}

Next we consider $Z_\alpha$.  The total intensity of jumps larger than $1$ is
\begin{align}\label{chi-alpha}
\chi_\alpha:=\nu_\alpha((1,\infty))&=\frac{\alpha}{\Gamma(1-\alpha)}\int_1^\infty s^{-1-\alpha}\dd s\cr
&=\frac1{\Gamma(1-\alpha)}=\frac{1-\alpha}{\Gamma(2-\alpha)},
\end{align}
where the last term is uniformly bounded on $[1/2,1)$. In particular, the number of jumps larger than $1$ is finite almost surely, and $Z_\alpha$ is a compound Poisson random variable.

Fix $r\geq1$, and let
$$D_r=(r^2,2r^2],\quad B_r=(1,\infty)\setminus D_r.$$
Set
$$N_1:=\mathcal N_\alpha((0,1]\times D_r),\quad N_2:=\mathcal N_\alpha((0,1]\times B_r).$$
Then $N_1$ and $N_2$ are Poisson random variables with intensities $\nu_\alpha(D_r)$ and $\nu_\alpha(B_r)$, respectively.  We define the event
$$E_{\alpha,r}:= \{Y_\alpha\leq M\} \cap \{ N_1=1\}\cap\{ N_2=0\}.$$
Since the three sets $(0,1]\times(0,1]$, $(0,1]\times D_r$ and $(0,1]\times B_r$ are pairwise disjoint, by the independence property of Poisson random measures, the events $\{Y_\alpha\leq M\}$, $\{N_1=1\}$ and $\{N_2=0\}$ are independent.  Hence
\begin{align}\label{estmate-E}
\Pb(E_{\alpha,r}) &=\Pb(Y_\alpha\leq M)\Pb(N_1=1)\Pb(N_2=0) \cr
&= \Pb(Y_\alpha\leq M) e^{-\nu_\alpha(D_r)}\nu_\alpha(D_r)e^{-\nu_\alpha(B_r)} \cr
&=\Pb(Y_\alpha\leq M)e^{-[\nu_\alpha(D_r)+\nu_\alpha(B_r)]}\nu_\alpha(D_r).
\end{align}
Observe that $\nu_\alpha(D_r)+\nu_\alpha(B_r)=\nu_\alpha((1,\infty))=\chi_\alpha$, since $D_r\cup B_r=(1,\infty)$ and $D_r\cap B_r=\emptyset$, and
\begin{align*}
\nu_\alpha(D_r)&=\frac{\alpha}{\Gamma(1-\alpha)}\int_{r^2}^{2r^2}s^{-1-\alpha}\dd s\\
&=\frac{1-2^{-\alpha}}{\Gamma(1-\alpha)}r^{-2\alpha}=\frac{(1-2^{-\alpha})(1-\alpha)}{\Gamma(2-\alpha)}r^{-2\alpha}.
\end{align*}
Combining this with \eqref{estmate-E}, \eqref{Y-M} and \eqref{chi-alpha}, we obtain
\begin{align*}
\Pb(E_{\alpha,r})&=\Pb(Y_\alpha\leq M)e^{-\chi_\alpha}\nu_\alpha(D_r)\\
 &\geq \frac12 \frac{(1-2^{-\alpha})(1-\alpha)}{\Gamma(2-\alpha)} e^{-\sup_{\alpha\in[1/2,1)}\chi_\alpha} r^{-2\alpha}.
\end{align*}
Thus, there exists a constant $c>0$, independent of $\alpha$ and $r$, such that
\begin{equation}\label{lower-estimate-E}
\Pb(E_{\alpha,r})\geq c(1-\alpha)r^{-2\alpha}.
\end{equation}

Note that on $E_{\alpha,r}$, there is exactly one jump $J(\omega)\in D_r$ larger than $1$. Consequently, for every $\omega\in E_{\alpha,r}$,
$$r^2<S_1^\alpha(\omega)=Y_\alpha(\omega)+J(\omega)\leq 2r^2+M\leq(2+M)r^2,\quad r\geq1.$$
Combining this with \eqref{lower-estimate-E}, we can find a constant $c_d>0$, depending only on $d$, such that
\begin{align*}
F_\beta(r)&=\E\left[(4\pi S_1^\alpha)^{-d/2}\exp\left(-\frac{r^2}{4S_1^\alpha}\right)\right] \\
&\geq\E\left[(4\pi S_1^\alpha)^{-d/2}\exp\left(-\frac{r^2}{4S_1^\alpha}\right)\mathbf 1_{E_{\alpha,r}} \right] \\
&\geq[4\pi(M+2)]^{-d/2}e^{-1/4}r^{-d}\Pb(E_{\alpha,r})\\
&\geq c_d(1-\alpha)r^{-d-2\alpha},\quad r\geq1.
\end{align*}
Since $1-\alpha=(2-\beta)/2$, the desired estimate follows.
\end{proof}

As a direct consequence of Lemma \ref{tail-lower}, we have the following result.
\begin{lemma}\label{psi-tail}
There exists a constant $B_d>0$, depending only on $d$, such that for $r\geq1$ and $1\leq\beta<2$,
\begin{equation*}
0\leq \psib(r^2)\leq B_d+\log\frac1{2-\beta}+(d+\beta)\log r.
\end{equation*}
\end{lemma}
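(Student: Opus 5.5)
The plan is to combine the definition \eqref{psi-def} with the tail lower bound of Lemma~\ref{tail-lower}. Non-negativity of $\psib$ is already known from Lemma~\ref{psi-concave}(ii), so only the upper bound needs an argument. Note that $\psib(r^2)=\log F_\beta(0)-\log F_\beta(r^2)$, since $F_\beta(r^2)=\Phib(r\e)$.

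First, there is a slight notational point. Lemma~\ref{tail-lower} is phrased for $F_\beta(r)$, but its proof actually bounds $\E[(4\pi S_1^\alpha)^{-d/2}\exp(-r^2/(4S_1^\alpha))]=\Phib(r\e)=F_\beta(r^2)$. So I will use it in the form
$$F_\beta(r^2)\geq k_d(2-\beta)r^{-d-\beta},\quad r\geq1,\ 1\leq\beta<2.$$
If one insisted on the literal statement $F_\beta(r)\ge k_d(2-\beta)r^{-d-\beta}$ applied at $r^2$, the result would be a $2(d+\beta)\log r$ term. The stated coefficient $(d+\beta)\log r$ matches the former reading, so I take it.

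Second, I need an upper bound on $F_\beta(0)$ that is uniform in $\beta\in[1,2)$. By \eqref{F-Laplace} and \eqref{negative-moment} with $\kappa=d/2$,
$$F_\beta(0)=(4\pi)^{-d/2}\E(S_1^\alpha)^{-d/2}=(4\pi)^{-d/2}\frac{\Gamma(d/\beta)}{\alpha\Gamma(d/2)}.$$
This is continuous in $\beta$ on $[1,2]$, hence bounded by some $A_d$.

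Combining the two bounds gives
$$\psib(r^2)\leq\log A_d-\log k_d+\log\frac1{2-\beta}+(d+\beta)\log r.$$
Setting $B_d:=\max\{\log(A_d/k_d),0\}+1$, which is positive, finishes the proof. The only genuine subtlety is the $r$ versus $r^2$ bookkeeping in invoking Lemma~\ref{tail-lower}; everything else is routine continuity of Gamma quotients.
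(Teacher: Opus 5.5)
Your proof is correct and follows the paper's argument: bound $F_\beta(0)=(4\pi)^{-d/2}\Gamma(d/\beta)/(\alpha\Gamma(d/2))$ uniformly for $\beta\in[1,2)$ via the negative-moment formula, then combine with the tail lower bound of Lemma~\ref{tail-lower}. Your reading of that lemma as a bound on $\Phi_\beta(r\mathbf{e})=F_\beta(r^2)$ is the right one; the paper's proof makes the same identification when it writes $\psi_\beta(r^2)=\log(F_\beta(0)/F_\beta(r))$. Your choice $B_d=\max\{\log(A_d/k_d),0\}+1$ also secures $B_d>0$ explicitly, which the paper's $B_d=\log(C_d/k_d)$ leaves implicit.
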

\begin{proof}
The lower bound follows from Lemma \ref{psi-concave}(i). For the upper bound, by \eqref{negative-moment}, we have the uniform bound
\begin{align*}
F_\beta(0)&=(4\pi)^{-d/2}\E (S_1^\alpha)^{-d/2}  \\
       &=(4\pi)^{-d/2}\frac{\Gamma(d/\beta)}{(\beta/2)\Gamma(d/2)}\leq C_d,\quad 1\leq\beta<2,
\end{align*}
where $C_d>0$ is a constant depending only on $d$. Combining this with Lemma~\ref{tail-lower}, for any $r\geq1$ and $1\leq\beta<2$,
\begin{align*}
\psib(r^2)=\log\frac{F_\beta(0)}{F_\beta(r)} &\leq \log\frac{C_d}{k_d(2-\beta)r^{-d-\beta}}\\
&=B_d+\log\frac1{2-\beta}+(d+\beta)\log r,
\end{align*}
where $k_d>0$ is a constant depending only on $d$ and $B_d=\log(C_d/k_d)$.
\end{proof}

\section{Proof of the main theorem}\label{sec-proof}\hskip\parindent
Building on the results of the previous section, we now proceed to prove our main result.
\begin{proof}[Proof of Theorem \ref{main}]
Split the remainder $\mathcal R_{\beta,d}$ in \eqref{decomp} into two parts:
\begin{align}\label{pf-main-1}
\mathcal R_{\beta,d} &=c_{\beta,d}\int_{|h|<1}\frac{H\bigl(\psib(|h|^2)\bigr)}{|h|^{d+\beta}}\dd h
+c_{\beta,d}\int_{|h|\geq1}\frac{H\bigl(\psib(|h|^2)\bigr)}{|h|^{d+\beta}}\dd h\cr
&=: {\rm I} + {\rm II}.
\end{align}

We first estimate ${\rm I}$. By \eqref{H-estimate} and Lemma \ref{psi-concave} (v), we have
\begin{align*}
H\big(\psib(|h|^2)\big)\leq \frac12\psib(|h|^2)^2\leq \frac{M_d^2}{2}|h|^4,\quad |h|<1.
\end{align*}
Hence, using polar coordinates and Lemma~\ref{cbeta}, we derive
\begin{align}\label{pf-main-2}
{\rm I}&\leq \frac{c_{\beta,d}M_d^2}{2}|\sphere|\int_0^1 r^{3-\beta}\dd r \cr
&=\frac{c_{\beta,d}M_d^2}{2(4-\beta)}|\sphere|\cr
&\leq C_d(2-\beta),\quad \beta\in[1,2),
\end{align}
for some constant $C_d>0$ depending only on $d$.

Now we estimate ${\rm II}$. Let $\eps=2-\beta$. By \eqref{H-estimate} and  Lemma~\ref{psi-tail}, we have
\begin{align}\label{pf-main-II}
{\rm II}&\leq c_{\beta,d}|\sphere|\int_1^\infty\frac{B_d-\log\eps+(d+\beta)\log r}{r^{1+\beta}}\dd r\cr
&=c_{\beta,d}|\sphere|\left[\frac{B_d-\log\eps}{\beta}+\frac{d+\beta}{\beta^2}\right],
\end{align}
where in the last equality we also used the elementary identities
$$\int_1^\infty r^{-1-b}\dd r=\frac1b,\quad \int_1^\infty (\log r)r^{-1-b}\dd r=\frac1{b^2},\quad\quad b>0.$$
Since $\beta\in[1,2)$ and $c_{\beta,d}\leq K_d\eps$ (see Lemma \ref{cbeta}), by \eqref{pf-main-II}, we obtain
\begin{align}\label{pf-main-3}
{\rm II}&\leq K_d|\sphere| \eps\left(B_d+d+\log\frac e\eps\right)\cr
&\leq \tilde{K}_d\eps\log\frac e\eps
=\tilde{K}_d(2-\beta)\log\frac{e}{2-\beta},\quad \beta\in[1,2),
\end{align}
where $K_d,\tilde{K}_d>0$ are constants depending only on $d$.

Putting \eqref{pf-main-1}, \eqref{pf-main-2} and \eqref{pf-main-3} together, we have for any $1\leq\beta<2$,
\begin{align*}
\mathcal R_{\beta,d}&\leq C_d(2-\beta)+\tilde{K}_d(2-\beta)\log\frac e{2-\beta}\\
&\leq M_d (2-\beta)\log\frac e{2-\beta},
\end{align*}
where $M_d>0$ is a constant depending only on $d$.  Combining this with Lemma \ref{main-decomposition} gives \eqref{main-rate}. By letting $\beta\rightarrow2^-$, we immediately obtain \eqref{main-limit}.
\end{proof}


\end{document}